\journalname{JOTA}
\begin{document}

%\title{Fast  Projection onto the  $\ell_1$ Ball Intersected with an $\ell_2$ Ball with Application to Sparse Principal Component Analysis \thanks{This work was supported in part by the national Natural Science Foundation of China under grant numbers 61172060 and 61403011.}
%\subtitle{Do you have a subtitle?\\ If so, write it here}
\title{Unifying Farkas lemma and S-lemma: new theory and applications in nonquadratic nonconvex optimization}

%\titlerunning{Projections onto $\ell_1$ ball intersected with an $\ell_2$ ball}        % if too long for running head
\author{Meijia Yang$^1$   \and  Yong Xia$^2$   \and  Shu Wang$^3$ }

\authorrunning{M. Yang, Y. Xia, S. Wang} % if too long for running head

\institute{
\Letter Yong Xia, Corresponding author\at yxia@buaa.edu.cn
\and
   Meijia Yang \at myang@ustb.edu.cn           %  \\
\and
           Shu Wang \at wangshu.0130@163.com           %
                     %  \\
\and
 $1$  School of Mathematics and Physics, University of Science and Technology Beijing, Beijing, 100083, P. R. China  \\
 $2$  State Key Laboratory of Software Development Environment, LMIB of the Ministry of Education, School of Mathematics and System Sciences, Beihang University, Beijing, 100191, P. R. China\\
 $3$ Institute of Computational Mathematics and Scientific/Engineering Computing, State Key Laboratory of Scientific and Engineering Computing, Academy of Mathematics and Systems Science, Chinese Academy of Sciences, Beijing,
100190, P. R. China
   }

\date{Received: date / Accepted: date}
% The correct dates will be entered by the editor

\maketitle

\begin{abstract}
We unify nonlinear Farkas lemma and S-lemma to a generalized alternative theorem for nonlinear nonconvex system. It provides fruitful applications in globally solving nonconvex non-quadratic optimization problems via revealing the hidden convexity.
 % \PACS{PACS code1 \and PACS code2 \and more}
% \PACS{PACS code1 \and PACS code2 \and more}
% \subclass{MSC code1 \and MSC code2 \and more}
\end{abstract}
\keywords{Farkas lemma \and S-lemma \and Hidden convexity \and Nonconvex optimization}
\subclass{90C20 \and  90C22 \and 90C26}

\section{Introduction}
The celebrated Farkas lemma dates back to \cite{F1902}, which characterizes when a linear system has no real solution. Farkas lemma plays a key role in developing strong duality theory of linear programming.
 Following the spirit of applying the separation theorem of two convex sets to prove Farkas lemma,
one can extend Farkas lemma to nonlinear but convex system,  see Theorem 21.1 in \cite{R70}, Section 6.10 in \cite{SW1970}, the survey papers \cite{DJ2014,J2000} and references therein.

The first alternative result for nonconvex system may be due to Finsler \cite{F1937}. There are two homogeneous quadratic forms in a system, with one quadratic form equal to zero and the other one less than or equal to zero. This system has nonzero solution if and only if there is a real constant such that the linear combination of these two matrices is always definite. A strict and more general version of Finsler's theorem, the so-called S-lemma, was first used by Lur'e and Postnikov \cite{LP44} without any theoretical proof at that time. The first proof of S-lemma came out thirty years later due to Yakubovich \cite{Y1971}. Nowadays more proofs of S-lemma are summarized in \cite{P2007}.

S-lemma has many important applications in control theory, robust optimization and convex geometry as well. In nonconvex optimization,  it reveals the hidden convexity of some nonconvex problems including the well-known trust-region subproblem \cite{CGT} and its generalizations, see for example, \cite{J2018,XWS2016}.

There are many generalizations of S-lemma. The most famous one is due to Polyak \cite{P1998}, who established the alternative theorem for the system with three quadratic forms under some assumptions. Jeyakumar et al. \cite{J2009} extended the domain of the system from $\Bbb R^n$ to any linear manifold.  Hu and Huang \cite{HH} generalized S-lemma to even order tenors. Xia et al.  \cite{XWS2016} established necessary and sufficient conditions for S-lemma with equality. As an extension, S-lemma with interval bounds is proposed in \cite{WX2015}. Other generalizations can be found in recent surveys \cite{P2007,DP2006}.

Notice that S-lemma and most of its variations are restricted to nonconvex but quadratic system. The goal of this paper is to establish a general alternative theorem  for nonconvex and non-quadratic system. Following the sprit of  the extension from Farkas lemma to extended Farkas lemma, we intend to combine S-lemma with the nonlinear Farkas lemma. It leads to a unified alternative lemma for nonconvex non-quadratic system, denoted by U-lemma. U-lemma not only generalizes the nonlinear Farkas lemma and S-lemma as special cases, but also plays a more powerful role in helping polynomially solve some nonconvex non-quadratic optimization problems by revealing hidden convexity.

The remainder of this paper is organized as follows. In Section 2, we review some main results which can help better understand our thinking. In Section 3, we establish the unified lemma (U-lemma) by combining  nonlinear Farkas lemma with S-lemma.  In Section 4, we apply the U-lemma to solve two nonconvex nonquadratic optimization problems, the $p$-regularized subproblem and the backward error criterion problem, by revealing its hidden convexity.

\section{Preliminaries}
Throughout the paper we denote by $v(\cdot)$ the optimal value of the problem $(\cdot)$.
%Notation "$:=$" means "define".
Denote by $A\succeq (\preceq)~0$ a positive (negative) semi-definite matrix $A$. Especially, $A\succ 0$ denotes that $A$ is positive definite. Let $I$ be the identity matrix. For a symmetric matrix $A$,  tr$(A)=\sum_{i=1}^nA_{ii}$, $A^+$ denotes the Moore-Penrose generalized inverse of $A$, Range$(A)$ means the range space of $A$ and $\lambda_{\min}(A)$ ($\lambda_{\max}(A)$) denotes the minimal (maximal) eigenvalue of $A$.
%The relative interior of the set $\Omega$ is denoted by ${\rm relint} (\Omega)$.
$\Bbb R_+=\{x\in\Bbb R:~x\ge0\}$.
 For a real number $a$, sign$(a)$ returns the sign of $a$, i.e., sign$(a)=-1$ if $a<0$, and sign$(a)=1$ otherwise.

The alternative theorem for linear system is the Farkas lemma\cite{F1902}:
\begin{theorem}[Farkas lemma] Let $A\in\Bbb R^{m\times n}$ and $b\in \Bbb R^m$. Then the following two statements are equivalent:
\begin{itemize}
\item[(i)]The linear system $Ax\leq b,~x\geq 0$ has no solution.
\item[(ii)]There exists a nonnegative vector $y\in \Bbb R^m$ such that $A^Ty\geq 0$ and $b^Ty<0$.
\end{itemize}
\end{theorem}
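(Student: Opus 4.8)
The implication (ii)$\Rightarrow$(i) is the routine half: if some $x$ solved $Ax\le b$, $x\ge0$ while $y\ge0$ met $A^Ty\ge0$ and $b^Ty<0$, then left–multiplying $Ax\le b$ by $y^T\ge0$ would give the chain $0\le(A^Ty)^Tx=y^TAx\le y^Tb=b^Ty<0$, absurd. So I would clear this away at once and concentrate on (i)$\Rightarrow$(ii), which I would obtain by separation, in the spirit recalled in the Introduction.

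First I would rewrite the solvability of the system geometrically. Set
\[
  C=\bigl\{\,Ax+s\ :\ x\in\R^n,\ s\in\R^m,\ x\ge0,\ s\ge0\,\bigr\}\subseteq\R^m,
\]
which is precisely the convex cone in $\R^m$ generated by the columns of $A$ together with the standard unit vectors $e_1,\dots,e_m$. Since a vector $x\ge0$ satisfies $Ax\le b$ exactly when the slack $s:=b-Ax$ is $\ge0$, the system $Ax\le b$, $x\ge0$ is solvable if and only if $b\in C$. Thus assumption (i) is the statement $b\notin C$.

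Next I would invoke that a finitely generated convex cone is closed; hence $C$ is a closed convex set with $b\notin C$, so the separation theorem produces $y\in\R^m$ and $\beta\in\R$ with $y^Tb<\beta$ and $y^Tz\ge\beta$ for all $z\in C$. Taking $z=0\in C$ gives $\beta\le0$, hence $b^Ty=y^Tb<0$; and since $C$ is a cone, $y^Tz<0$ for some $z\in C$ would force $y^T(tz)\to-\infty$ as $t\to+\infty$, contradicting the lower bound, so in fact $y^Tz\ge0$ for all $z\in C$. Evaluating this at $z=e_i$ yields $y\ge0$, and evaluating it at $z=Ax$ for arbitrary $x\ge0$ yields $(A^Ty)^Tx\ge0$ for all such $x$, i.e.\ $A^Ty\ge0$. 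This $y$ is exactly the certificate demanded in (ii).

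The only genuinely nontrivial ingredient is the closedness of the finitely generated cone $C$: without it the separating functional would only deliver $y^Tb\le\beta$, too weak to conclude $b^Ty<0$. I would establish it by the classical Carathéodory-type reduction — every point of $\mathrm{cone}\{v_1,\dots,v_k\}$ lies in the cone of a linearly independent subfamily, each such subcone is the image of a closed orthant under an injective linear map and hence closed, and there are only finitely many subfamilies, so $C$ is a finite union of closed sets. A fully elementary alternative, avoiding topology entirely, is to run Fourier–Motzkin elimination on the variables of $Ax\le b$, $x\ge0$ and read off $y$ from the accumulated multipliers; I would note this as a backup but keep the separation argument as the main line, since that is the template the paper goes on to extend.
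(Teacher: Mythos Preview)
Your argument is correct: the easy direction is dispatched cleanly, and for (i)$\Rightarrow$(ii) you reduce to $b\notin C$ with $C$ the finitely generated cone, invoke closedness of $C$, separate, and read off $y\ge0$, $A^Ty\ge0$, $b^Ty<0$ from the generators. The care you take with closedness (via Carath\'eodory reduction to linearly independent subcones) is the right place to be careful, and your fallback to Fourier--Motzkin is a sound elementary alternative.

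There is, however, nothing to compare against: the paper does not prove this theorem. It is stated in the Preliminaries section as a classical result with a citation to \cite{F1902}, and no proof is given. The Introduction does remark in passing that the separation theorem is the standard route (``Following the spirit of applying the separation theorem of two convex sets to prove Farkas lemma\ldots''), so your choice of method is exactly the template the paper has in mind and later extends, but the paper itself simply quotes the result.
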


The alternative theorem for convex system is the extended Farkas lemma\cite{R70,SW1970,DJ2014,J2000}.
\begin{theorem}[Extended Farkas lemma]\label{Flem}
Let $f, g_1, \ldots, g_m: \Bbb R^n \rightarrow \Bbb R$ be convex functions and $\Omega$ be a convex set in $\Bbb R^n$. Assume Slater's condition holds for $g_1, \ldots, g_m$,
i.e., there exists an $\widetilde{x}\in relint(\Omega)$ (the set of relative interior points of $\Omega$) such that $g_i(\widetilde{x}) < 0, ~i = 1, \ldots,m$. The following two
statements are equivalent:
\begin{itemize}
\item[(i)] The system $\{f(x)<0,~g_i(x)\le 0, ~i=1,\ldots,m,~x\in \Omega\}$ is not solvable.
\item[(ii)] There exist $\lambda_i\ge 0~(i=1,\ldots,m)$ such that
\[
f(x)+\sum_{i=1}^m \lambda_ig_i(x)\ge 0,~\forall x\in \Omega.
\]
\end{itemize}
\end{theorem}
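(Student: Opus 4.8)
\noindent The plan is the following. I would first dispose of the trivial implication (ii)$\,\Rightarrow\,$(i): if $\lambda_i\ge 0$ satisfy $f(x)+\sum_{i=1}^m\lambda_i g_i(x)\ge 0$ for all $x\in\Omega$, then any feasible point of the system in (i) would satisfy $0>f(x)\ge -\sum_{i=1}^m\lambda_i g_i(x)\ge 0$, which is absurd. All of the content is in (i)$\,\Rightarrow\,$(ii), which I would prove by a separation argument in the value space $\Bbb R^{m+1}$, exactly in the spirit of the separation-theorem proof of the classical Farkas lemma.

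First I would introduce the set
\[
C=\Bigl\{(u_0,u_1,\ldots,u_m)\in\Bbb R^{m+1}:\ \exists\,x\in\Omega\ \text{with}\ f(x)\le u_0,\ g_i(x)\le u_i\ (i=1,\ldots,m)\Bigr\}.
\]
Convexity of $f,g_1,\ldots,g_m$ and of $\Omega$ makes $C$ convex (take the corresponding convex combination of the witnesses $x$); moreover $C$ is closed under increasing any coordinate, so, being nonempty since Slater's condition supplies a point of $\Omega$, it contains a translate of $\Bbb R_+^{m+1}$ and therefore has nonempty interior. The key observation is that infeasibility of the system in (i) forces $0=(0,\ldots,0)\notin\operatorname{int}(C)$: if $0$ were in the interior, then $(-\varepsilon,0,\ldots,0)\in C$ for some $\varepsilon>0$, which would produce an $x\in\Omega$ with $f(x)\le-\varepsilon<0$ and $g_i(x)\le 0$ for all $i$, contradicting (i).

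Next I would separate $0$ from the nonempty open convex set $\operatorname{int}(C)$ by the separating hyperplane theorem, obtaining a nonzero $(\mu_0,\mu_1,\ldots,\mu_m)$ with $\mu_0 u_0+\sum_{i=1}^m\mu_i u_i\ge 0$ for all $(u_0,\ldots,u_m)\in\overline{C}\supseteq C$. Since $C$ is unbounded above in each coordinate, all $\mu_i$ must be $\ge 0$. Evaluating at $\bigl(f(x),g_1(x),\ldots,g_m(x)\bigr)\in C$ for arbitrary $x\in\Omega$ yields
\[
\mu_0 f(x)+\sum_{i=1}^m\mu_i g_i(x)\ge 0,\qquad \forall\,x\in\Omega,
\]
and it remains to show $\mu_0>0$. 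This is precisely the step that uses Slater's condition: if $\mu_0=0$, then $(\mu_1,\ldots,\mu_m)\neq 0$ and the displayed inequality reduces to $\sum_{i=1}^m\mu_i g_i(x)\ge 0$ on $\Omega$, which is contradicted at the Slater point $\widetilde{x}\in\operatorname{relint}(\Omega)\subseteq\Omega$ because $\sum_{i=1}^m\mu_i g_i(\widetilde{x})<0$. Hence $\mu_0>0$, and dividing through by $\mu_0$ and setting $\lambda_i=\mu_i/\mu_0\ge 0$ gives statement (ii).

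I expect the elimination of the case $\mu_0=0$ to be the main obstacle --- that is, guaranteeing that the separating functional genuinely "sees" the $f$-coordinate, which is exactly what forces Slater's condition into the argument. The remaining ingredients are routine: verifying convexity and upward-closedness of $C$, observing that $0$ is not interior to $C$, and invoking the separating hyperplane theorem.
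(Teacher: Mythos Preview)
The paper does not supply its own proof of this theorem: Theorem~\ref{Flem} is stated in the Preliminaries section as a known result, with citations to \cite{R70,SW1970,DJ2014,J2000}, and the introduction explicitly remarks that the proof proceeds ``following the spirit of applying the separation theorem of two convex sets.'' Your proposal is correct and is precisely that classical separation-in-value-space argument: form the upward-closed convex image set $C\subseteq\Bbb R^{m+1}$, observe that (i) forces $0\notin\operatorname{int}(C)$, separate, use upward-closedness to get $\mu_i\ge 0$, and use the Slater point to rule out $\mu_0=0$. There is nothing to compare against in the paper itself, but your argument matches the standard one in the cited references (e.g., Rockafellar, Theorem~21.1) and is exactly the proof the paper is alluding to.
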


A general alternative result for nonconvex system is the so-called S-lemma\cite{LP44,Y1971,P2007}
\begin{theorem}[S-lemma]\label{Slem}
Let $f,~g:~\Bbb R^n\rightarrow \Bbb R$ be quadratic functions and suppose that there is an $\widetilde{x}\in \Bbb R^n$ such that $g(\widetilde{x})<0$. Then the following two statements are equivalent:
\begin{itemize}
\item[(i)]There is no $x\in \Bbb R^n$ such that $f(x)<0$, $g(x)\leq 0$.
\item[(ii)]There exists $\mu\geq 0$ such that $f(x)+\mu g(x)\geq 0,~\forall x\in \Bbb R^n$.
    \end{itemize}
\end{theorem}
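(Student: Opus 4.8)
The implication (ii) $\Rightarrow$ (i) is immediate: if $\mu\ge 0$ and $f(x)+\mu g(x)\ge 0$ for all $x\in\Bbb R^n$, then every $x$ with $g(x)\le 0$ satisfies $f(x)\ge -\mu g(x)\ge 0$, so no $x$ can fulfil both $f(x)<0$ and $g(x)\le 0$. The entire content is the converse (i) $\Rightarrow$ (ii), and the plan is to reproduce the separation-of-convex-sets argument that underlies the extended Farkas lemma, the only genuinely new ingredient being a convexity property of the joint range of two quadratic functions.

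First I would introduce the planar set
\[
U\;=\;\bigl\{(u,v)\in\Bbb R^2:\ \exists\,x\in\Bbb R^n,\ f(x)\le u,\ g(x)\le v\bigr\},
\]
that is, the image of $\Bbb R^n$ under $x\mapsto (f(x),g(x))$ enlarged by the nonnegative orthant $\Bbb R^2_+$. By construction, statement (i) is exactly the assertion that $U$ is disjoint from the convex set $\{(u,v):u<0,\ v\le 0\}$; in particular $(0,0)\notin\mathrm{int}\,U$. The crucial step is then to show that $U$ is convex. For homogeneous quadratic forms this is the classical theorem of Dines on the convexity of $\{(x^{T}Ax,\,x^{T}Bx):x\in\Bbb R^n\}$; to obtain the non-homogeneous version I would homogenize, writing $f$ and $g$ as homogeneous quadratic forms in the variable $(1,x)\in\Bbb R^{n+1}$, apply Dines' theorem on $\Bbb R^{n+1}$, and relate the affine slice $\{x_0=1\}$ back to $U$. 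The delicate issue here is the behaviour along the ``directions at infinity'' $\{x_0=0\}$, which is cleanest to handle by a small perturbation --- for instance replacing $f$ by $f+\varepsilon\|x\|^2$, proving the statement for the perturbed data, and letting $\varepsilon\downarrow 0$.

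Granting the convexity of $U$, I would separate the point $(0,0)$ from $U$ (legitimate since $(0,0)\notin\mathrm{int}\,U$), obtaining $(\mu_1,\mu_2)\ne(0,0)$ with $\mu_1 u+\mu_2 v\ge 0$ for all $(u,v)\in U$. Because $U$ contains $(f(x_0)+s,\,g(x_0)+t)$ for every fixed $x_0$ and all $s,t\ge 0$, letting $s\to\infty$ and then $t\to\infty$ forces $\mu_1\ge 0$ and $\mu_2\ge 0$, while letting $s,t\downarrow 0$ gives $\mu_1 f(x)+\mu_2 g(x)\ge 0$ for every $x\in\Bbb R^n$. The Slater point enters exactly at this final step, to exclude $\mu_1=0$: if $\mu_1=0$ then $\mu_2>0$ and $g(x)\ge 0$ for all $x$, contradicting $g(\widetilde x)<0$. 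Hence $\mu_1>0$, and $\mu:=\mu_2/\mu_1\ge 0$ satisfies $f(x)+\mu g(x)\ge 0$ for all $x$, which is (ii).

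The main obstacle is clearly the convexity of $U$. Dines' theorem for two homogeneous forms is itself nontrivial, and passing from it to arbitrary quadratic (that is, affine-plus-constant) functions requires controlling the unbounded directions: $U$ need not be closed, and a naive intersection of the homogeneous convex cone with the slice $\{x_0=1\}$ can fail to be convex without a limiting argument of the type above. Everything past that point --- the separation, the sign analysis, and the normalization through the Slater point --- is routine and mirrors the proof of the extended Farkas lemma.
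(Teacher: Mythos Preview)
The paper does not give its own proof of this theorem: it is quoted in the preliminaries as a classical result (with references to Yakubovich and the survey \cite{P2007}), and the only comment offered is that ``Yakubovich's proof of S-lemma is fully based on'' Dines' theorem (Theorem~\ref{thm:Dines}). Your plan --- homogenise, invoke Dines' convexity in $\Bbb R^{n+1}$, separate, and normalise through the Slater point --- is precisely the Yakubovich argument the paper alludes to, so there is nothing to compare beyond that.

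One genuine weakness in your sketch, however, is the proposed fix for the ``directions at infinity''. Replacing $f$ by $f+\varepsilon\|x\|^2$ does not help: for small $\varepsilon$ the leading matrix $A+\varepsilon I$ is still indefinite in general, so the slice $x_0=0$ of the homogenised system is exactly as problematic as before, and you have no mechanism to pass from large $\varepsilon$ to $\varepsilon\downarrow 0$. The clean way through is direct and uses the Slater point you already have: if some $x_0\neq 0$ satisfies $x_0^TAx_0<0$ and $x_0^TBx_0\le 0$, look at the ray $s\mapsto \widetilde{x}+sx_0$. When $x_0^TBx_0<0$ both $f$ and $g$ tend to $-\infty$; when $x_0^TBx_0=0$, choose the sign of $x_0$ so that the linear part of $g$ along the ray is nonincreasing, whence $g(\widetilde{x}+sx_0)\le g(\widetilde{x})<0$ for $s\ge 0$ while $f(\widetilde{x}+sx_0)\to-\infty$. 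Either way (i) is violated. This shows the homogenised version of (i) holds on all of $\Bbb R^{n+1}$, after which Dines plus your separation-and-normalisation paragraph go through verbatim.
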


Yakubovich's proof of S-lemma is fully based on the following hidden convexity of the joint-range for a pair of quadratic forms
\begin{theorem}[Dines, {\rm{\cite{Dines}}}]\label{thm:Dines}
Let $f_H, g_H \in \Bbb R^n\rightarrow \Bbb R$ be quadratic forms, i.e., $f_H=x^TAx,~g_H(x)=x^TBx$ with $A$ and $B$ symmetric matrices, then the set $\{(f_H(x), g_H(x)):~ x\in \Bbb R^n\}\subseteq \Bbb R^2$ is convex.
\end{theorem}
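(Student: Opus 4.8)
\emph{Proof proposal.} The plan is to reduce to two variables and then realize the joint range as a cone over an ellipse, whose convexity is elementary.

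\textbf{Step 1: reduction to $n=2$.} Given $x,y\in\R^n$ and $\lambda\in[0,1]$, I must place $\lambda(x^TAx,x^TBx)+(1-\lambda)(y^TAy,y^TBy)$ in the joint range. Put $V=\mathrm{span}\{x,y\}$, so $\dim V\le 2$; fixing a basis of $V$ identifies $V$ with $\R^{d}$ ($d\le 2$) and turns $z\mapsto(z^TAz,z^TBz)$ into $w\mapsto(w^TA'w,w^TB'w)$ for some symmetric $A',B'$. For $d\le 1$ this range is a ray, hence convex; so it suffices to prove the statement for $n=2$, after which the desired convex combination lies in $\{(w^TA'w,w^TB'w):w\in\R^{d}\}\subseteq\{(z^TAz,z^TBz):z\in\R^n\}$.

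\textbf{Step 2: the range is a cone over an ellipse.} Take $n=2$ and write $x=r(\cos\theta,\sin\theta)$. The double-angle identities give
\[
x^TAx=r^{2}\bigl(c_A+p_A\cos2\theta+q_A\sin2\theta\bigr),\qquad x^TBx=r^{2}\bigl(c_B+p_B\cos2\theta+q_B\sin2\theta\bigr),
\]
where $c_A=\tfrac12\,\mathrm{tr}(A)$, $c_B=\tfrac12\,\mathrm{tr}(B)$ and $p_A,q_A,p_B,q_B$ are the obvious entries. Hence the joint range equals $\{s\gamma:s\ge0,\ \gamma\in\Gamma\}$, where
\[
\Gamma=\bigl\{(c_A,c_B)+\cos\psi\,(p_A,p_B)+\sin\psi\,(q_A,q_B):\psi\in[0,2\pi)\bigr\}
\]
is the affine image of a circle, i.e. an ellipse (possibly degenerate to a segment or a point).

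\textbf{Step 3 and the main obstacle.} It remains to show that $\mathrm{cone}(\Gamma):=\{s\gamma:s\ge0,\ \gamma\in\Gamma\}$ is convex for every ellipse $\Gamma\subseteq\R^2$, and this is where the real content sits. I would argue by cases on the position of the origin relative to the region bounded by $\Gamma$: if the origin is strictly inside, every ray from $0$ meets $\Gamma$ and $\mathrm{cone}(\Gamma)=\R^2$; if the origin lies strictly outside, $\mathrm{cone}(\Gamma)$ is the pointed wedge cut off by the two tangent lines from $0$; if the origin lies on $\Gamma$, a linear change of variables reducing $\Gamma$ to a circle through $0$ shows $\mathrm{cone}(\Gamma)$ is an open half-plane together with $\{0\}$; and the degenerate shapes (segment or point) give a ray, a line, or $\{0\}$. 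Each of these is convex. The delicate points are precisely the boundary case ``origin on $\Gamma$'' and the degenerate ellipses. I would also flag the tempting but failing alternative of proving directly that the range is additive by rotating the pair $(x,y)$ to $(x\cos t+y\sin t,\,-x\sin t+y\cos t)$ and invoking the intermediate value theorem: one cannot in general equalize the $A$- and $B$-components simultaneously, so routing through the ellipse picture of Step 2 is what makes the argument close.
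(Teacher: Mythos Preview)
The paper does not supply its own proof of Dines' theorem; it is quoted as a classical result with a citation to \cite{Dines}. The only argument the paper offers is the one-line remark after Theorem~\ref{Uconv} that Dines follows from the Flores-Baz\'an--Opazo characterization (itself imported from \cite{F2016}): for homogeneous $f,g$ one has $a=b=0$, so whenever $-d\in M_H(\R^n)$ condition $(A4)$ holds automatically, and otherwise $(A3)$ holds. So there is no in-paper proof to compare against beyond this deduction from a black-box theorem.

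Your route is a direct, self-contained argument and is essentially correct. The reduction to $n=2$ in Step~1 is the standard one and is valid; the double-angle parametrization in Step~2 correctly exhibits the two-dimensional joint range as $\{s\gamma:s\ge 0,\ \gamma\in\Gamma\}$ with $\Gamma$ a (possibly degenerate) ellipse. In Step~3 the case split is sound, and the delicate case you flag---origin on a nondegenerate $\Gamma$---does indeed yield an open half-plane together with $\{0\}$, which is convex. One small gap in your enumeration: when $\Gamma$ degenerates to a segment whose affine hull does \emph{not} pass through the origin, $\mathrm{cone}(\Gamma)$ is a two-dimensional closed sector, not merely a ray, a line, or $\{0\}$; this is still convex, so the argument survives, but you should list it. Compared with the paper's remark, your approach buys an elementary, fully explicit proof that does not rest on the much heavier Theorem~\ref{Uconv}; the paper's one line buys brevity at the cost of importing that stronger cited result.
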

Interestingly, this theorem appears in literature as early as S-lemma itself. Now we are ready to propose our new theorem in the next section.

\section{A unified alternative lemma}
In this section, by combining nonlinear Farkas lemma with S-lemma, we establish a unified alternative lemma (denoted by U-lemma) for more general non-homogeneous nonconvex system.

Throughout this section, we let
\begin{equation}
f(x)=x^TAx+a^Tx+p,~g(x)=x^TBx+b^Tx+q \label{fg}
\end{equation}
be two quadratic functions. Denote by $M= \{(f(x), g(x)):~ x\in \Bbb R^n\}\subseteq \Bbb R^2$ the joint-range for the pair $(f,g)$. Define $M_H(x)=(x^TAx,x^TBx)$ and $M_L(x)=(a^Tx,b^Tx)$.

In order to establish our U-lemma,  we need the following full characterization of the convexity of $M$ rather than Dines's Theorem (Theorem \ref{thm:Dines}), which, as we have mentioned in the introduction,  is crucial in proving S-lemma.
\begin{theorem}[\cite{F2016}]\label{Uconv}
$M$ is convex if, and only if for all $d\in \Bbb R^2,~d=(d_1,d_2)\neq (0,0)$, any of the following
conditions hold:
\begin{itemize}
\item[(A1)] There exists $x$ such that $Ax=0$, $Bx=0$ and $M_L(x)\neq (0,0)$.
\item[(A2)] $d_1B\neq d_2 A$.
\item[(A3)] $-d\not\in M_H(\Bbb R^n)$.
\item[(A4)] There exists $x$ such that $M_H(x)=-d$ and  $d_1 b^Tx= d_2 a^Tx$.
\end{itemize}
\end{theorem}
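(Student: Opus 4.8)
The engine should be Dines' theorem (Theorem~\ref{thm:Dines}), which already settles the homogeneous case, so the first move is to \emph{homogenize} and thereby reduce everything to the convex-cone setting. Put $F(x,t)=x^{T}Ax+t\,a^{T}x+t^{2}p$ and $G(x,t)=x^{T}Bx+t\,b^{T}x+t^{2}q$; these are genuine quadratic forms on $\mathbb{R}^{n+1}$, so by Theorem~\ref{thm:Dines} the set $\widetilde M:=\{(F(x,t),G(x,t)):(x,t)\in\mathbb{R}^{n+1}\}\subseteq\mathbb{R}^{2}$ is convex, and it is plainly a cone. Since $F(x,1)=f(x)$, $F(x,0)=x^{T}Ax$ and $F(x,t)=t^{2}f(x/t)$ for $t\neq0$ (likewise for $G$), one obtains the decomposition
\[
\widetilde M \;=\; \Bigl(\,\bigcup_{s>0}sM\,\Bigr)\;\cup\;M_{H}(\mathbb{R}^{n}),
\]
and a second application of Theorem~\ref{thm:Dines}, now to the forms $x^{T}Ax$ and $x^{T}Bx$, shows that $M_{H}(\mathbb{R}^{n})$ is itself a convex cone. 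Thus convexity of $\widetilde M$ and of $M_{H}(\mathbb{R}^{n})$ comes for free; what the theorem asserts is that $M$ is convex precisely when it fills out $\widetilde M$ ``at level $t=1$'' without gaps, and that (A1)--(A4) enumerate, direction by direction, the possible gaps.

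\noindent\textbf{Sufficiency ($\Leftarrow$), by contraposition.} Suppose $M$ is not convex, so there are $m_{0},m_{1}\in M$ and $\lambda\in(0,1)$ with $\bar m:=(1-\lambda)m_{0}+\lambda m_{1}\notin M$; the goal is to exhibit a direction $d$ at which all of (A1)--(A4) fail. First, (A1) must fail: if some $x_{0}$ satisfies $Ax_{0}=Bx_{0}=0$ and $M_{L}(x_{0})\neq(0,0)$, then the identities $f(z+\mu x_{0})=f(z)+\mu\,a^{T}x_{0}$ and $g(z+\mu x_{0})=g(z)+\mu\,b^{T}x_{0}$ give $M=M+\mathbb{R}\,M_{L}(x_{0})$, and a set invariant under a whole line of translations is convex as soon as its orthogonal projection is an interval, which here it is, being the range of a single quadratic function on $\mathbb{R}^{n}$ --- a contradiction. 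Next one shows that if $A$ and $B$ are linearly independent then $M$ is convex (again a Dines-type solvability statement after restricting to an affine subspace); since $M$ is not convex, $A$ and $B$ are proportional, which forces the candidate $d$ to lie on the unique line $\{d:d_{1}B=d_{2}A\}$, so (A2) fails there. It remains to locate a direction on that line at which (A3) and (A4) fail as well. With $d_{1}B=d_{2}A$ the function $\ell:=d_{2}f-d_{1}g$ is affine, so in the coordinates $(\,\ell,\ d_{1}f+d_{2}g\,)$ the set $M$ becomes a region over the interval $\ell(\mathbb{R}^{n})$ whose upper and lower envelopes are obtained by extremising the quadratic $d_{1}f+d_{2}g$ over affine subspaces; tracing how such a region can fail to be convex shows it forces a parabola of $M$ escaping to infinity along one of $\pm d$, with opening direction in $M_{H}(\mathbb{R}^{n})$, and simultaneously forces the ``aligned'' perturbations to be absent. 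Reading this off gives exactly $-d\in M_{H}(\mathbb{R}^{n})$ (so (A3) fails) together with $d_{1}b^{T}x\neq d_{2}a^{T}x$ for every $x$ with $M_{H}(x)=-d$ (so (A4) fails), which is the desired witness.

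\noindent\textbf{Necessity ($\Rightarrow$) and the main obstacle.} Conversely, given $d$ at which all of (A1)--(A4) fail, I would build an escaping convex combination by hand: the failure of (A2) puts $M$ in the degenerate ``proportional-Hessian'' regime relative to $d$; the failure of (A3) supplies an $x^{\star}$ with $M_{H}(x^{\star})=-d$, hence a parabola $t\mapsto(f(tx^{\star}),g(tx^{\star}))\subseteq M$ receding in direction $-d$; the failure of (A4) says the two ``sheets'' of $M$ stay separated along that recession, so a segment joining a point of one sheet to a point of the other escapes $M$; and the failure of (A1) removes the translation-invariance that would otherwise fill the gap. The real work --- the step that resists a routine calculation --- is precisely the analysis of this degenerate regime: one has to pin down when a quadric $\{x^{T}Qx=c\}$ fails to be connected (essentially when the relevant quadratic form has exactly one eigenvalue of the ``wrong'' sign), follow the behaviour of the affine function $\ell$ across its components, and cope with the fact that $M$ need not be closed, so that $\bar m$ may lie only on the relative boundary of $M$. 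Making the correspondence between the four conditions and these degenerate sheet-configurations exact, uniformly over all choices of $(A,B,a,b,p,q)$, is where the difficulty concentrates; homogenization is what keeps the case analysis finite and hands each subcase back to Dines' theorem.
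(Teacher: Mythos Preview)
The paper does not prove Theorem~\ref{Uconv}; it is stated with a citation to Flores-Baz\'an and Opazo~\cite{F2016} and then invoked as an external tool in the proof of the U-lemma (Theorem~\ref{Ulem}). There is consequently no proof in this paper against which your attempt can be compared; if you want the original argument you must consult~\cite{F2016} directly.

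As to the proposal itself: the homogenization device and the observation that (A1) forces $M$ to be invariant under translation by the line $\mathbb{R}\,M_L(x_0)$ are sound, and are indeed ingredients in the Flores-Baz\'an--Opazo development. But two load-bearing steps are asserted rather than argued. First, the claim ``if $A$ and $B$ are linearly independent then $M$ is convex'' is precisely one of the non-trivial conclusions of~\cite{F2016}; labeling it ``a Dines-type solvability statement after restricting to an affine subspace'' does not discharge it, and in fact a sizable share of the technical effort in that paper goes into this case. Second, in the necessity direction you yourself acknowledge that the degenerate proportional-Hessian analysis --- connectedness of the level quadric, behaviour of the affine section $\ell$, and the non-closedness of $M$ --- ``is where the difficulty concentrates''; your sketch names the relevant phenomena correctly but does not carry out the case work. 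What you have written is a plausible outline of the architecture of the proof in~\cite{F2016}, not a self-contained proof.
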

Theorem \ref{Uconv} contains Dines's Theorem as a special case since if $M_H(\Bbb R^n)\equiv(0,0)$ then $(A3)$ holds and otherwise $(A4)$ automatically holds as $a=b=0$.
%It also covers Polyak's  sufficient condition that
%\begin{equation}
%n\ge2,~ \exists \alpha,\beta\in\Bbb R:~ \alpha A+\beta B\succ 0,\label{PS}
%\end{equation}
%see  Theorem 2.2 in \cite{P1998}. In addition, $M$ is also closed under the assumption (\ref{PS}). %Generally, though $M$ is not always convex, according to (A2), the measure of nonconvex cases of  $M$ is zero. A slight perturbation of $A$ or $B$ will make the nonconvex set $M$ convex.

\begin{theorem}
[U-lemma]\label{Ulem}
Suppose $f(x)$ and $g(x)$ are two quadratic functions defined in (\ref{fg}). Let  $q_0(z),\ldots,q_m(z)$ be convex functions defined in a convex set $\Omega\subseteq \Bbb R^n$. Assume that there exist $\widetilde{x}\in\Bbb R^n$ and $\widetilde{z}\in {\rm relint} (\Omega)$ such that $g(\widetilde{x})+q_1(\widetilde{z})<0$, $q_i(\widetilde{z})<0,~i=2,\ldots,m$. Then the following two statements are equivalent:
\begin{itemize}
\item[(i)]The system
\[
f(x)+q_0(z)<0,~ g(x)+q_1(z)\leq 0,~ q_i(z)\leq 0,~i=2,\ldots,m,~z\in\Omega
\]
has no solution $(x,z)$.
\item[(ii)]There exist $\mu_i\geq 0,~i=1,2,\ldots,m$ such that
\[
f(x)+\mu_1 g(x)+q_0(z) +\sum_{i=1}^m\mu_i q_i(z)\geq 0,~\forall~x\in \Bbb R^n,z\in \Omega.
\]
\end{itemize}
\end{theorem}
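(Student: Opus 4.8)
I would treat the easy implication $(ii)\Rightarrow(i)$ first: if nonnegative $\mu_1,\dots,\mu_m$ satisfy the inequality in $(ii)$ and $(x,z)$ were a solution of the system in $(i)$, then regrouping the left‑hand side as $[f(x)+q_0(z)]+\mu_1[g(x)+q_1(z)]+\sum_{i=2}^{m}\mu_iq_i(z)$ gives a value that is strictly negative, contradicting $(ii)$.

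For the substantive implication $(i)\Rightarrow(ii)$ the plan is to run the separation argument that underlies both the extended Farkas lemma (Theorem \ref{Flem}) and the S-lemma (Theorem \ref{Slem}), with the convexity of the quadratic joint range $M$ (Theorem \ref{Uconv}) taking over the role that convexity of the constraint functions, respectively Dines's theorem (Theorem \ref{thm:Dines}), plays there. First I would introduce the ``value set''
\[
\Scal=\Big\{(u_0,\dots,u_m)\in\R^{m+1}:\ \exists\, x\in\R^n,\ z\in\Omega\ \text{with}\ f(x)+q_0(z)<u_0,\ g(x)+q_1(z)\le u_1,\ q_i(z)\le u_i\ (i\ge 2)\Big\},
\]
so that statement $(i)$ says exactly that $0\notin\Scal$, and $\Scal+\R^{m+1}_+=\Scal$. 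The crux is to prove that $\Scal$ is convex. Given two representatives $(x^1,z^1)$ and $(x^2,z^2)$ and $\theta\in(0,1)$, the point $z^\theta=\theta z^1+(1-\theta)z^2$ lies in $\Omega$, and convexity of each $q_i$ together with convexity of $\Omega$ disposes of every coordinate except the contributions of $f$ and $g$; what remains is to produce $x^\theta$ with $f(x^\theta)\le\theta f(x^1)+(1-\theta)f(x^2)$ and $g(x^\theta)\le\theta g(x^1)+(1-\theta)g(x^2)$, i.e. to show that a convex combination of two points of $M$ is dominated coordinatewise by a point of $M$. This is exactly where the full characterization in Theorem \ref{Uconv} is needed rather than Dines's theorem: when the conditions (A1)--(A4) hold, $M$ is convex and the combination already lies in $M$; when they fail for some $d\neq(0,0)$, condition (A2) in particular fails, so $d_1B=d_2A$ and hence $A$ and $B$ are linearly dependent, the pair $(f,g)$ degenerates, and the required domination can be checked by a direct argument exploiting this degeneracy. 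I expect this case distinction — that adjoining the nonconvex quadratic pair $(f,g)$ to the convex data $q_0,\dots,q_m$ cannot destroy convexity of $\Scal$ — to be the main obstacle.

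Once $\Scal$ is known to be convex, since $0\notin\Scal$ there is a separating functional $\lambda=(\lambda_0,\dots,\lambda_m)\neq 0$ with $\langle\lambda,u\rangle\ge 0$ for all $u\in\Scal$ (choosing the orientation so that the inequality points this way); because $\Scal$ is closed under adding nonnegative vectors, all $\lambda_i\ge0$. If $\lambda_0=0$, then feeding the Slater data $(\widetilde x,\widetilde z)$ into $\Scal$ — take $u_0$ slightly above $f(\widetilde x)+q_0(\widetilde z)$ and the remaining coordinates equal to $g(\widetilde x)+q_1(\widetilde z)<0$ and $q_i(\widetilde z)<0$ — gives $\langle\lambda,u\rangle=\sum_{i\ge1}\lambda_iu_i<0$ unless $\lambda=0$, a contradiction; hence $\lambda_0>0$. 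Normalizing $\lambda_0=1$ and setting $\mu_i=\lambda_i\ge0$, for every $x\in\R^n$, $z\in\Omega$ and $\varepsilon>0$ the vector $(f(x)+q_0(z)+\varepsilon,\ g(x)+q_1(z),\ q_2(z),\dots,q_m(z))$ belongs to $\Scal$, whence $f(x)+\mu_1g(x)+q_0(z)+\sum_{i=1}^{m}\mu_iq_i(z)+\varepsilon\ge 0$; letting $\varepsilon\downarrow 0$ yields $(ii)$. Thus the entire argument hinges on the convexity of $\Scal$, and it is precisely there that the sharper joint-range result of Theorem \ref{Uconv}, not merely Dines's theorem, is indispensable.
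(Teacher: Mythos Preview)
Your handling of $(ii)\Rightarrow(i)$ and of the main case where $M=\{(f(x),g(x)):x\in\R^n\}$ is convex is essentially the paper's argument: the paper separates $M_1=\{(f(x),g(x),0,\dots,0):x\in\R^n\}$ from the convex set $M_2=\{v:v_0+q_0(z)<0,\ v_i+q_i(z)\le0,\ z\in\Omega\}$, which is equivalent to separating your $\Scal=M_1+(-M_2)$ from the origin, and the Slater point is used exactly as you describe to force the leading separating coefficient to be strictly positive.

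The genuine gap is the degenerate case in which (A1)--(A4) all fail for some $d\neq 0$. You correctly note that then $d_1B=d_2A$, hence $A,B$ are linearly dependent, but ``the required domination can be checked by a direct argument exploiting this degeneracy'' is not a proof, and the paper does \emph{not} proceed by showing that $\Scal$ (equivalently $M+\R_+^2$) stays convex. Instead it imports a structural normal form (Remark~4.17 of \cite{F2016}):
\[
(f(x),g(x))=\Bigl(\textstyle\sum_{i=1}^{p}y_i^2-y_{p+1}^2\Bigr)d+(t_1y_1+t_2y_{p+1})d_\perp+(k_1,k_2),
\]
with $t_1,t_2\neq0$ and an arithmetic side condition, and then splits on the integer $p$. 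For $p\ge1$ a sign-by-sign case analysis on $(d_1,d_2)$ exhibits paths along which $f$ and $g$ tend to $-\infty$ \emph{simultaneously}; together with the Slater point $\widetilde z$ this makes the system in $(i)$ always solvable, so $(i)\Rightarrow(ii)$ is vacuous. For $p=0$ the problem is effectively one-dimensional; after a further affine change the paper substitutes $w=\widetilde y_1^{\,2}\ge0$, replaces $(f,g)$ by a pair $\widetilde f(w),\widetilde g(w)$ that are \emph{convex} on $\R_+$ (the key being the term $-|\widetilde e_2|\sqrt{w}$ in $\widetilde g$), and then applies the extended Farkas lemma (Theorem~\ref{Flem}) to the enlarged convex system in $(w,z)$, transferring the resulting multipliers back to the original $(x,z)$ system via $f(x)=\widetilde f(w)$ and $g(x)\ge\widetilde g(w)$.

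Neither of these two moves is a routine ``degeneracy check'': the $p\ge1$ step hinges on the precise normal form and the implication in (\ref{yt}), and the $p=0$ step is a genuinely new reduction to Farkas rather than a repair of the separation argument. Your outline gives no indication of either idea, so as it stands the degenerate case is unproved; if you do wish to push through your domination route, you will still need essentially this canonical form to verify $\mathrm{conv}(M)\subseteq M+\R_+^2$.
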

\begin{proof}
We mainly show that (i) implies (ii) as the opposite case is trivial.
We first assume that any of the conditions $(A1)$-$(A4)$ hold.  According to Theorem \ref{Uconv},
the set $M$ is convex and hence
\[
 M_1=\{(f(x),g(x),\underbrace{0,\ldots,0}_{m-1}):~x\in \Bbb R^n\}\subseteq \Bbb R^{m+1}
 \]
  is convex. Define
%\[
%M_2=\{(v_0,v_1,v_2,\ldots,v_m):~v_0+q_0(z)<0, v_i+q_i(z)\leq 0,~i=1,\ldots,m,~z\in \Omega\}\subseteq \Bbb R^{m+1}.
%\]
\begin{eqnarray*}
M_2=\{(v_0,v_1,v_2,\ldots,v_m):~v_0+q_0(z)<0, &&v_i+q_i(z)\leq 0,\\&&i=1,\ldots,m,~z\in \Omega\}
\subseteq \Bbb R^{m+1}.
\end{eqnarray*}
Since $\Omega$ is a convex set and $q_i(z)$ ($i=0,1,\ldots,m$) are all convex functions, it follows that the set $M_2$ is convex.

Under the assumption that Condition $(i)$ holds, the set $M_1$ does not intersect $M_2$. According to the separation theorem for convex sets, $M_1$ and $M_2$ can be separated by a hyperplane, i.e., there exists $\gamma=(\gamma_0,\ldots,\gamma_m)\neq 0$ such that
\begin{equation}
\sum_{i=0}^m \gamma_i v_i \leq \gamma_0 f(x)+\gamma_1 g(x),~
\forall (v_0,v_1,\ldots,v_m)\in M_2,~\forall x\in \Bbb R^n.\label{iq1}
\end{equation}
First we have $\gamma_i\geq 0$ for $i=0,\ldots,m$. Otherwise, there is an index $i$ such that $\gamma_i<0$, letting $v_i\rightarrow-\infty$ yields a contradiction.
Next we show $\gamma_0>0$. Suppose this is not true, i.e.,   $\gamma_0=0$.  Notice that there exist $\widetilde{x}\in\Bbb R^n$ and $\widetilde{z}\in{\rm relint}(\Omega)$ such that $g(\widetilde{x})+q_1(\widetilde{z})<0$ and $q_i(\widetilde{z})<0,~i=2,\ldots,m$. We set $v_0=-\epsilon -q_0(\widetilde{z})$ where $\epsilon>0$ is arbitrarily small. Since $\gamma\neq 0$, $\gamma_0=0$ and  $\gamma_i\geq 0,~i=1,\ldots,m$, we have %either $\gamma_1>0$ or $\gamma_k>0$ for some $k\in\{2,\ldots,m\}$.
the following two cases to consider:
\begin{itemize}
\item[(a)]If $\gamma_1>0$, taking $x=\widetilde{x}$, $v_1=-q_1(\widetilde{z})$ and $v_i=0~(i=2,\ldots,m)$ in (\ref{iq1}) yields that $\gamma_1 g(\widetilde{x})+\gamma_1 q_1(\widetilde{z})\geq 0$, and hence $g(\widetilde{x})+ q_1(\widetilde{z})\geq 0$, which contradicts the assumption.
\item[(b)] If $\gamma_1=0$, then there is a $k\in\{2,\ldots,m\}$ such that $\gamma_k>0$. Taking $v_k=-q_k(\widetilde{z})$ and $v_i=0~(i\in\{1,\ldots,m\}\setminus\{k\})$ in (\ref{iq1}) yields that $q_k(\widetilde{z})\ge0$, which leads to a contradiction.
\end{itemize}
Therefore, it must hold that $\gamma_0>0$.

By setting $v_0=-q_0(z)-\epsilon$ for any $\epsilon>0$, $v_i=-q_i(z)$ ($i=1,\ldots,m$), $z\in\Omega$ in (\ref{iq1}) and then
dividing both sides of the inequality (\ref{iq1}) by $\gamma_0$, we obtain
\[
f(x)+\mu_1 g(x)+q_0(z)+\sum_{i=1}^m\mu_i q_i(z)+\epsilon\geq 0,~\forall~x\in \Bbb R^n,~z\in \Omega,~\epsilon>0,
\]
where $\mu_i:=\gamma_i/\gamma_0,~i=1,2,\ldots,m$.
Letting $\epsilon\rightarrow 0$ yields
\[
f(x)+\mu_1 g(x)+q_0(z)+\sum_{i=1}^m\mu_i q_i(z)\geq 0,~\forall~x\in \Bbb R^n,~z\in \Omega.
\]

Now, suppose none of the conditions $(A1)$-$(A4)$ holds. One can give a complete description of the nonconvexity of $M$ according to Theorem \ref{Uconv}. As shown in Remark 4.17 in \cite{F2016}, there is a $d=(d_1,d_2)\neq(0,0)$ and a linear transformation from $x$ to $y$ such that
\begin{equation}
\left(f(x),g(x)\right)=\left(\sum_{i=1}^py_i^2-y_{p+1}^2\right)d+
\left(t_1y_1+t_{2}y_{p+1}\right)d_{\perp}+(k_1,k_2),
\label{fgy}
\end{equation}
where $p\in\{0,1,2\ldots,n-1\}$, $d_{\perp}=(e_1,e_2)\neq(0,0)$ such that $e_1d_1+e_2d_2=0$, $t_1\neq 0$, $t_2\neq0$, $k_1$, $k_2$ are real constants, and
\begin{equation}
y_{p+1}^2=1+\sum_{i=1}^py_i^2 ~\Longrightarrow~t_1y_1+t_{2}y_{p+1}\neq 0.\label{yt}
\end{equation}
%Since (A3) does not hold, it follows from (\ref{fgy}) that
%\begin{equation}
%p=0~\Longrightarrow~  d_1\ge0,~d_2\ge0. \label{me0}
%\end{equation}

Suppose $p=0$.
We first assume $d_1\neq 0$. %According to (\ref{yt}), $t_1+t_2\neq 0$.
Introducing the linear transformation  $\widetilde{y}_1=y_1-(t_1+t_2)e_1/(2d_1)$, we can reformulate (\ref{fgy}) as
\begin{equation}
f(x)=-\widetilde{y}_{1}^2d_1 +\widetilde{k}_1,~
g(x)=-\widetilde{y}_{1}^2d_2 +\widetilde{e}_2\widetilde{y}_{1}+ \widetilde{k}_2,\label{fgy2}
\end{equation}
where $\widetilde{k}_1$, $\widetilde{k}_2$ and $\widetilde{e}_2$ are real constants.

Define two convex functions over $\Bbb R_+$:
\begin{equation}
\widetilde{f}(w):=-d_1w +\widetilde{k}_1, ~\widetilde{g}(w):=-d_2w -|\widetilde{e}_2|\sqrt{w}+ \widetilde{k}_2.\label{fw}
\end{equation}
Suppose Condition $(i)$ holds, we conclude that the system
\begin{equation}
\widetilde{f}(w)+q_0(z)<0,~ \widetilde{g}(w)+q_1(z)\leq 0,~ q_i(z)\leq 0,~i=2,\ldots,m,~w\in\Bbb R_+,z\in\Omega \label{zw}
\end{equation}
has no solution $(w,z)$. Otherwise, for any solution $(w,z)$ satisfying (\ref{zw}), setting $\widetilde{y}_{1}=-{\rm sign}(\widetilde{e}_2)\sqrt{w}$ in (\ref{fgy2}) yields that
  \[
  f(x)=\widetilde{f}(w),~g(x)=-wd_2 -|\widetilde{e}_2|\sqrt{w}+ \widetilde{k}_2
=\widetilde{g}(w),
  \]
which contradicts the assumption that Condition $(i)$ is true.
  According to the extended Farkas lemma (Theorem \ref{Flem}), there exist $\mu_i\geq 0,~i=1,2,\ldots,m$ such that
\[
\widetilde{f}(w)+\mu_1 \widetilde{g}(w)+q_0(z) +\sum_{i=1}^m\mu_i q_i(z)\geq 0,~\forall w\in \Bbb R_+,~\forall z\in \Omega.
\]
It follows from the definitions (\ref{fgy2}) and (\ref{fw}) that for any $x\in\Bbb R^n$, there is a $w=\widetilde{y}_{1}^2$ such that
\begin{equation}
f(x)=\widetilde{f}(w),~g(x)\ge -\widetilde{y}_{1}^2d_2 -|\widetilde{e}_2|\sqrt{\widetilde{y}_{1}^2}+ \widetilde{k}_2
=\widetilde{g}(w). \nonumber
\end{equation}
Therefore, Condition $(ii)$ holds for these nonnegative $\mu_i$ $(i=1,2,\ldots,m)$.

It is similar to study the other case $d_1=0$, which implies that $d_2\neq0$ and  $e_2=0$.  The only difference is that the unique linear term lies in $f$ rather than $g$.

Now we consider $p\ge1$. We show case by case that
\begin{equation}
\inf_{x\in \Bbb R^n}(f(x),g(x))=(-\infty,-\infty). \label{inffg}
\end{equation}
\begin{itemize}
\item $d_1=0$. Then $d_2\neq0$, $e_2=0$ and $e_1\neq 0$.
%If $p=0$, it follows from (\ref{me0}) that $d_2>0$. According to (\ref{fgy}), setting $y_{p+1}=-$sign$(t_2e_1)s$ with $s\rightarrow +\infty$ yields (\ref{inffg}).
%Now we assume that $p\ge 1$.
If $d_2>0$, setting $y_{p+1}=-$sign$(t_2e_1)s$ with $s\rightarrow +\infty$ and $y_i=0$, $\forall i\neq p+1$ yields  (\ref{inffg}). Otherwise, $d_2<0$, setting $y_{1}=-2$sign$(t_1e_1)s$, $y_{p+1}=-$sign$(t_2e_1)s$ with $s\rightarrow +\infty$ and $y_i=0$, $\forall i\neq 1,p+1$, yields  (\ref{inffg}).
\item $d_2=0$. This case is similar to the above case $d_1=0$.
\item $d_1>0,~d_2>0$. Setting $y_{p+1}\rightarrow +\infty$ and $y_i=0$, $\forall i\neq p+1$ yields (\ref{inffg}).
\item $d_1<0,~d_2<0$.
%According to (\ref{me0}), we have $p\ge 1$.
Let $y_{1}\rightarrow +\infty$ and $y_i=0$, $\forall i\neq 1$, we obtain (\ref{inffg}).
\item $d_1d_2<0$.
%According to (\ref{me0}), we have $p\ge 1$. %    Without loss of generality, we assume $d_1>0$ and $d_2<0$.
    Since  $e_1d_1+e_2d_2=0$, we have $e_1e_2>0$. Set  $y_{1}=-$sign$(t_1e_1)s$,  $y_{p+1} =-$sign$(t_2e_2)\sqrt{1+y_1^2}$,
     and $y_i=0$, $\forall i\neq 1,p+1$.
    It follows from (\ref{yt}) that $(t_1y_1+t_{2}y_{p+1})e_i<0$ for any $s>0$. Letting $s\rightarrow +\infty$ leads to (\ref{inffg}).
\end{itemize}
According to (\ref{inffg}) and the existence of $\widetilde{z}$, the system in Condition $(i)$ is always  feasible. Therefore, U-lemma in this case automatically holds.
\end{proof}

\begin{remark}
When $q_i(z)\equiv 0~(i=0,\ldots,m)$, U-lemma reduces to the classical S-lemma (Theorem \ref{Slem}). When $f(x)\equiv0$ and $g(x)\equiv0$, U-lemma reduces to the extended Farkas lemma (Theorem \ref{Flem}).
\end{remark}

\section{Applications}
Our newly established U-lemma fills a gap in alternative theory for non-quadratic nonconvex system. In this section, we show that it has critical applications in globally solving some non-quadratic nonconvex optimization problems by revealing the hidden convexity.
\subsection{Generalized $p$-regularized subproblem with $p>2$}
The generalized $p$-regularized subproblem ($p$-RS) was first proposed in \cite{G2010}:
\begin{eqnarray*}
(p{\rm \text{-}RS)}~~\min_{x\in\Bbb R^n} \left\{h(x)=x^TAx+b^Tx+\rho\|x\|^p\right\},
\end{eqnarray*}
where $\|\cdot\|$ is the standard $l_2$-norm, $p>2$ and $\rho>0$. Without loss of generality, we fix $\rho=1$ in this subsection.
The special case ($3$-RS)  is the well-known Nestrov-Polyak subproblem \cite{N06}, which is an important subproblem  in regularized Newton methods for unconstrained optimization. The double-well potential minimization problem \cite{Fa16} corresponds to  ($4$-RS). If $A\succeq 0$, $h(x)$ is convex as $p>2$. Throughout this subsection, we assume $\lambda_{\min}(A)<0$ so that ($p$-RS) is a nonconvex optimization problem.

Recently,  necessary and sufficient condition for the global minimizer of ($p$-RS) has been established in \cite{HSY2017}. Moreover, in the same paper, it was proved that ($p$-RS) has at most one local non-global minimizer and the  necessary and sufficient condition for the local non-global minimizer is also established.  However, the hidden convexity of ($p$-RS) remains unknown. It was stated in
 \cite{HSY2017}  that ``Notice that, due to the regularization term $\frac{\sigma}{p} \|x\|^p,~p>2$, ($p$-RS) cannot be formulated and solved by a semi-definite program or by polynomial optimization methods.''
The reason behind the  impossibility to reveal the hidden convexity of ($p$-RS)is the lack of mathematical tools to handle non-quadratic nonconvex optimization problems.

Based on U-lemma, now we can show the hidden convexity of ($p$-RS) for $p>2$:
\allowdisplaybreaks[4]
\begin{eqnarray}
&&v(p{\rm\text{-}RS})\nonumber\\
%&=&\min_x~x^TAx+b^Tx+\|x\|^p\nonumber\\
&=&\min_{x,z}\left\{x^TAx+b^Tx+z^{\frac{p}{2}}:~ x^Tx\leq z\right\}\label{prs:0}\\
&=&\sup\left\{t:\left\{(x,z): x^TAx+b^Tx+z^{\frac{p}{2}}-t<0, x^Tx-z\leq 0, z\in\Bbb R_+\right\}=\emptyset\right\}\nonumber\\
&=&\sup\left\{t:\exists\lambda \geq 0: x^TAx+b^Tx+\lambda x^Tx +z^{\frac{p}{2}}-t-\lambda z\geq 0,\forall x\in\Bbb R^n, \forall z\in \Bbb R_+\right\}\nonumber\\\label{prs:1}\\
&=&\sup\left\{t:\exists\lambda \geq 0:
x^TAx+b^Tx+\lambda x^Tx-t+\min_{z\in \Bbb R_+}\{z^{\frac{p}{2}}-\lambda z\}\geq 0,\forall x\in\Bbb R^n\right\}\nonumber\\
&=&\sup\left\{t:\exists\lambda \geq 0:x^TAx+b^Tx+\lambda x^Tx-t+\Phi(\lambda)\geq 0,\forall x\in\Bbb R^n\right\}\nonumber\\
&=&\sup\left\{t:
\left(\begin{array}{cc}A+\lambda I & \frac{1}{2}b\\
\frac{1}{2}b^T & -t+\Phi(\lambda)
\end{array}\right)\succeq 0,~\lambda\geq 0\right\}\label{prs:2}\\
%&=&\sup_{\lambda,w}\left\{\Phi(\lambda)-w:
%\left(\begin{array}{cc}A+\lambda I & \frac{1}{2}b\\
%\frac{1}{2}b^T & w
%\end{array}\right)\succeq 0,~\lambda\geq 0\right\}
&=&
\left\{\begin{array}{ll}
\sup\limits_{\lambda>-\lambda_{\min}(A)} \Phi(\lambda)-b^T(A+\lambda I)^{-1}b/4,& {\rm if}~b\not\in {\rm Range}(A-\lambda_{\min}(A)I),\\
\sup\limits_{\lambda\ge-\lambda_{\min}(A)} \Phi(\lambda)-b^T(A+\lambda I)^+b/4,& {\rm if}~b \in {\rm Range}(A-\lambda_{\min}(A)I),
\end{array}
\right.
\label{prs:hc}
\end{eqnarray}
where (\ref{prs:1}) is due to U-lemma (Theorem \ref{Ulem}) with the setting $f(x)=x^TAx+b^Tx-t$, $g(x)=x^Tx$, $q_0(z)=z^{\frac{p}{2}}$, $q_1(z)=-z$,  and $\Omega=\Bbb R_+$; (\ref{prs:2}) follows from the fact that
\begin{eqnarray}\label{prs:3}
x^TAx+2a^Tx+c\geq 0,~\forall x\in \Bbb R^n ~\Longleftrightarrow~ \left(\begin{array}{cc}A& a\\
a^T & c
\end{array}\right)\succeq 0;
\end{eqnarray}
(\ref{prs:hc}) holds due to Schur complement theorem,
and
$\Phi(\lambda )$ has an explicit formulation,
\begin{equation}
\Phi(\lambda )=\min_{z\in \Bbb R_+}\{z^{\frac{p}{2}}-\lambda z\}=\lambda^{\frac{p}{p-2}}\left(
\left(2/p\right)^{\frac{p}{p-2}}-
\left(2/p\right)^{\frac{2}{p-2}}\right), \label{Phi:1}
\end{equation}
which is  concave over $\Bbb R_+$ as $p>2$.

We have shown that ($p$-RS) is equivalent to a nonlinear univariate convex optimization problem (\ref{prs:hc}), which can be efficiently and polynomially solved by Newton's method. Let $\lambda^*$ be the optimal solution of (\ref{prs:hc}). Then, it follows from (\ref{Phi:1}) that the optimal $z$-solution of (\ref{prs:0}) is
\[
z^*=\left(2\lambda^*/p\right)^{\frac{2}{p-2}}.
\]
The global optimal solution of ($p$-RS), denoted by $x^*$, can be recovered by solving the trust-region subproblem (\ref{prs:0}) with $z=z^*$, which is easy to solve since the solved $\lambda^*$ serves as an optimal Lagrangian multiplier of (\ref{prs:0}). That is, $x^*=-(A+\lambda^*I)^{-1}b/2$ if $\lambda^*>-\lambda_{\min}(A)$ (the easy case) and otherwise, $x^*$ is any solution of
 the following linear and quadratic equations (the hard case):
\[
2(A+\lambda^*I) x+b=0,~x^Tx=z^*.
\]

\subsection{Backward error criterion}
In numerical analysis, the method with a small backward error is preferred. There are some backward error criteria proposed in \cite{S2003}. The most difficult one is
\begin{eqnarray}
({\rm BE})~~\min_{x\in\Bbb R^{n}} \frac{\|Ax-b\|}{\|A\|\|x\|+\|b\|},\label{cost:1}
\end{eqnarray}
where $A\in\Bbb R^{m\times n}$, $b\in\Bbb R^{m}$, $\|\cdot\|$ is standard $l_2$-norm and spectral norm for vectors and matrices, respectively. We assume $\|A\|>0$. If $Ax=b$ is solvable, then this solution also solves (BE)  and $v$(BE)$=0$. Moreover, if $\|b\|=0$, then $v$(BE)$=\sqrt{\lambda_{\min}(A^TA)}/\|A\|$. In this subsection, we consider the case where $v$(BE)$>0$ and $\|b\|>0$, which means that (BE) is a nontrivial nonconvex fractional programming problem.

With the help of U-lemma, we can show the hidden convexity of (BE) (\ref{cost:1}):
\begin{eqnarray}
&&\min_x \frac{\|Ax-b\|^2}{(\|A\|\|x\|+\|b\|)^2}\nonumber\\
&=&\sup_{t> 0}\left\{t:\{x:\|Ax-b\|^2-t(\|A\|\|x\|+\|b\|)^2<0\}=\emptyset\right\}\nonumber\\
&=&\sup_{t> 0}\left\{t:\{(x,z):\|Ax-b\|^2-t\|A\|^2z-2t\|A\|\|b\|\sqrt{z}-t\|b\|^2<0,
\right.\nonumber\\
&&\left.~~~~~~~~~~~~~~~~~~~~~\|x\|^2\geq z,~z\in\Bbb R_+\}=\emptyset\right\}\nonumber\\
&=&\sup_{t> 0}\left\{t:\exists\lambda\geq 0:\|Ax-b\|^2-t\|A\|^2z-2t\|A\|\|b\|\sqrt{z}-t\|b\|^2 \right. \nonumber\\
&&\left.~~~~~~~~~~~~~~~~~~~~~+\lambda(- \|x\|^2+ z)\geq 0, \forall x\in\Bbb R^{n},~\forall z\in\Bbb R_+\right\}\label{cf:1}\\
&=&\sup_{t> 0}\left\{t:\exists\lambda\geq 0:
\|Ax-b\|^2-t\|b\|^2 -\lambda \|x\|^2\right.\nonumber\\
&&~~~~~~~~~~~~~~~~~~~~~+\min_{z\in\Bbb R_+}\left\{\lambda z-t\|A\|^2z-2t\|A\|\|b\|\sqrt{z}\right\}\geq 0, \forall x\in\Bbb R^{n} \}\label{cf:12}\\
&=&\sup_{t> 0}\left\{t:\|Ax-b\|^2-\lambda\|x\|^2-t\|b\|^2-\frac{
t^2\|A\|^2\|b\|^2}{\lambda-t\|A\|^2}\geq 0, \lambda>t\|A\|^2,~\forall x\in\Bbb R^n\right\}\nonumber\\
&=&\sup_{t> 0}\left\{t:
\left(\begin{array}{cc} A^TA-\lambda I & -A^Tb\\
-b^TA & \|b\|^2-t\|b\|^2-\frac{t^2\|A\|^2\|b\|^2}{\lambda-t\|A\|^2}
\end{array}\right)\succeq 0,~\lambda>t\|A\|^2\right\}\label{cf:2}\\
&=&\sup_{w<\|b\|^2,\lambda>0}\left\{\frac{(w-\|b\|^2)\lambda}{w\|A\|^2-\|A\|^2\|b\|^2-\lambda\|b\|^2}: \left(\begin{array}{cc} A^TA-\lambda I & -A^Tb\\
-b^TA & w
\end{array}\right)\succeq 0\right\} \label{cf:3}\\
%&=&\sup\left\{ \frac{1}{\frac{\|A\|^2}{\lambda}+\frac{\|b\|^2}{\|b\|^2-w}}: \left(\begin{array}{cc} A^TA-\lambda I & -A^Tb\\
%-b^TA & w
%\end{array}\right)\succeq 0,~\lambda>0, ~w<\|b\|^2\right\}\nonumber\\
&=&1\left/\right.\inf_{w<\|b\|^2,\lambda>0}\left\{
\frac{\|A\|^2}{\lambda}+\frac{\|b\|^2}{\|b\|^2-w}: \left(\begin{array}{cc} A^TA-\lambda I & -A^Tb\\
-b^TA & w
\end{array}\right)\succeq 0 \right\}\nonumber\\
&=&\left\{
\begin{array}{ll}
1\left/\right.\inf\limits_{0<\lambda<\lambda_{\min}(A^TA)}\frac{\|A\|^2}{\lambda}+\frac{\|b\|^2}{\|b\|^2-b^TA
(A^TA-\lambda I)^{-1}A^Tb},& {\rm if}~A^Tb\not\in {\rm R}(A),\\
1\left/\right.\inf\limits_{0<\lambda\le\lambda_{\min}(A^TA)}\frac{\|A\|^2}{\lambda}+\frac{\|b\|^2}{\|b\|^2-b^TA
(A^TA-\lambda I)^{+}A^Tb},& {\rm if}~A^Tb \in {\rm R}(A),
\end{array}
\right. \label{cf:4}
\end{eqnarray}
where (\ref{cf:1}) follows from U-lemma (Theorem \ref{Ulem}) with $f(x)=\|Ax-b\|^2-t\|b\|^2$, $g(x)=-\|x\|^2$, $q_0(z)=-t\|A\|^2z-2t\|A\|\|b\|\sqrt{z}$, $q_1(z)=z$;  (\ref{cf:2}) is due to (\ref{prs:3}); (\ref{cf:3}) holds by introducing
\[
w=\|b\|^2-t\|b\|^2-\frac{t^2\|A\|^2\|b\|^2}{\lambda-t\|A\|^2},
\]
and then $\lambda>t\|A\|^2$ if and only if $w<\|b\|^2$ and $\lambda>0$;  (\ref{cf:4}) is due to Schur complement theorem and R$(A)={\rm Range}(A^TA-\lambda_{\min}(A^TA) I)$.

Problem  (\ref{cf:4}) is a univariate convex optimization problem, which can be solved by Newton's method. Let $\lambda^*$ and $t^*$ be the optimal solution and objective value of  (\ref{cf:4}), respectively. Then the optimal $z$-solution of the minimization subproblem in (\ref{cf:12}) is given by
\[
z^*= \left(t^*\|A\|\|b\|/(\lambda^* -t^*\|A\|^2)\right)^2.
\]
The optimal value of (BE) is:
\[
\min_x \frac{\|Ax-b\|^2}{(\|A\|\|x\|+\|b\|)^2}=t^*.
\]
So (BE) (\ref{cost:1}) is equivalent to the following problem:
\begin{eqnarray*}
&\min_{x\in\Bbb R^n,~z\in \Bbb R_+}& \|Ax-b\|^2-\|A\|^2t^*z-2t^*\|A\|\|b\|\sqrt{z}-t^*\|b\|^2\\
&{\rm s.t.}&\|x\|^2\ge z.
\end{eqnarray*}
%which has a concave constraint and a convex objective function with respect to $x$ and $z$, respectively.
%So we can solve the following problem first:
%\[
%\min_{x\in\Bbb R^n,~z\in R_+} \|Ax-b\|^2-\|A\|^2t^*z-2t^*\|A\|\|b\|\sqrt{z}-t^*\|b\|^2.
%\]
%The optimal solution is
%\[
%(x,z)=((A^TA)^{-1}Ab,-\frac{\|b\|}{\|A\|} ),
%\]
%which violates the constraint $z\in\Bbb R_+$.
Since the objective function is decreasing with respect to $z$, the inequality holds as an equality at the optimal solution. Therefore, $x$ is an optimal solution to (BE) (\ref{cost:1}) if and only if
\[
\|x\|^2=z^*,~\|Ax-b\|^2=t^*(\|A\|\sqrt{z^*}+\|b\|)^2.
\]
To solve  (BE), we first solve two trust-region subproblems
\[
x_m={\rm arg} \min_{\|x\|^2=z^*}\|Ax-b\|^2,~ x_M={\rm arg} \max_{\|x\|^2=z^*}\|Ax-b\|^2.
\]
If either $\|Ax_m-b\|^2$ or $\|Ax_M-b\|^2$ equals to $t^*(\|A\|\sqrt{z^*}+\|b\|)^2$, we are done. Otherwise, define \[
x(\alpha)=\sqrt{z^*}\frac{x_m+\alpha(x_M-x_m)}{\|x_m+\alpha(x_M-x_m)\|}.
\]
Solve the univariate equation
\[
\|Ax(\alpha)-b\|^2=t^*(\|A\|\sqrt{z^*}+\|b\|)^2,~\alpha\in(0,1)
\]
and obtain a solution $\alpha^*$. Then, $x(\alpha^*)$ is an optimal solution of  (BE).

\section{Conclusion}
We unify the nonlinear Farkas lemma and the classical S-lemma to a general alternative theorem, denoted by U-lemma. Some extensions are discussed.
Based on this powerful tool, we reveal hidden convexity of some nonconvex non-quadratic optimization problems,
including the $p$-regularized subproblem ($p>2$), the  backward error criterion problem. It helps to globally solve these nonconvex optimization problems in polynomial time.  We expect for more extensions and fruitful applications, or even beyond the area of optimization.

\begin{acknowledgements}
This research was supported  by National Natural Science Foundation of China
under grants 11822103, 11801173, 11771056 and Beijing Natural Science Foundation Z180005.
\end{acknowledgements}

% BibTeX users please use one of
%\bibliographystyle{spbasic}      % basic style, author-year citations
%\bibliographystyle{spmpsci_unsrt}
%%\bibliographystyle{spphys}       % APS-like style for physics
%\bibliography{reference11}   % name your BibTeX data base

\end{document}